\documentclass{article} 
\usepackage[final, nonatbib]{neurips_2019}

\newcommand{\eg}{{\it e.g.}}
\newcommand{\ie}{{\it i.e.}}

\newcommand{\BA}{\begin{array}}
\newcommand{\EA}{\end{array}}

\newcommand{\BIT}{\begin{itemize}}
\newcommand{\EIT}{\end{itemize}}

\newcommand{\reals}{{\mathbb{R}}} 

\newcommand{\argmin}{\mathop{\rm argmin}}
\newcommand{\argmax}{\mathop{\rm argmax}}







\usepackage[utf8]{inputenc} 
\usepackage[T1]{fontenc}    
\usepackage{hyperref}       
\usepackage{url}            
\usepackage{booktabs}       
\usepackage{amsfonts}       
\usepackage{nicefrac}       
\usepackage{microtype}      

\usepackage{graphicx}
\usepackage{amsthm}
\usepackage{amsmath}
\usepackage{color}
\usepackage{subcaption}

\newcommand{\Hc}{\mathcal{H}}
\newcommand{\Lc}{\mathcal{L}}

\newcommand{\etal}{\textit{et al.}}


\newtheorem*{theorem*}{Theorem}
\newtheorem{theorem}{Theorem}
\newtheorem{lemma}{Lemma}

\newtheorem{assumption}{Assumption}


\title{Hamiltonian descent for composite objectives}

\author{
  Brendan O'Donoghue\\
  DeepMind\\
  \texttt{bodonoghue@google.com} \\
  \And
  Chris J. Maddison\\
  DeepMind / University of Oxford\\
  \texttt{cmaddis@google.com} \\
}

\begin{document}
\maketitle

\begin{abstract}
In optimization the duality gap between the primal and the dual problems is a
measure of the suboptimality of any primal-dual point. In classical mechanics
the equations of motion of a system can be derived from the Hamiltonian
function, which is a quantity that describes the total energy of the system.  In
this paper we consider a convex optimization problem consisting of the sum of
two convex functions, sometimes referred to as a composite objective, and we
identify the duality gap to be the `energy' of the system.  In the Hamiltonian
formalism the energy is conserved, so we add a contractive term to the standard
equations of motion so that this energy decreases linearly (\ie, geometrically)
with time.  This yields a continuous-time ordinary differential equation (ODE)
in the primal and dual variables which converges to zero duality gap, \ie,
optimality.  This ODE has several useful properties: it induces a natural
operator splitting; at convergence it yields both the primal and dual solutions;
and it is invariant to affine transformation despite only using first order
information.  We provide several discretizations of this ODE, some of which are
new algorithms and others correspond to known techniques, such as the
alternating direction method of multipliers (ADMM).  We conclude with some
numerical examples that show the promise of our approach. We give
an example where our technique can solve a convex quadratic minimization problem
orders of magnitude faster than several commonly-used gradient methods, including
conjugate gradient, when the conditioning of the problem is poor.  Our framework
provides new insights into previously known algorithms in the literature as well
as providing a technique to generate new primal-dual algorithms.
\end{abstract}


\section{Introduction and prior work}
In physics the Hamiltonian function represents the total energy of
a system in some set of coordinates (loosely speaking). In the most typical case
the coordinates are the position  $x \in \reals^n$ and momentum $p
\in \reals^n$, and the Hamiltonian is the sum of
the potential energy, a function of the position, and the kinetic energy, a
function of the momentum. The equations of motion for the system can be derived
from the Hamiltonian. Let us denote the Hamiltonian as $\Hc: \reals^n \times
\reals^n \rightarrow \reals$, which we assume is differentiable, then the
equations of motion  \cite{hamilton1834general} are given by
\[
\dot x_t = \nabla_p \Hc(x_t, p_t), \quad \dot p_t = -\nabla_x \Hc(x_t, p_t),
\]
where we use the notation $\dot x_t:= dx_t / dt$.  For ease of
notation we shall sometimes
use $z := (x,p) \in \reals^{2n}$ to denote
the concatenation of the position and momentum into a single quantity, in which
case we can write the Hamiltonian flow as
\begin{equation}
\label{e-flow}
\dot z_t = J \nabla \Hc(z_t), \quad J = \begin{bmatrix} 0 & I \\ -I & 0 \end{bmatrix},
\end{equation}
and note that $J^TJ = I$ and that $J$ is skew symmetric, that is $J = - J^T$,
and so $v^TJv = 0$ for any $v$.
It is easy to show that these equations of motion conserve the Hamiltonian since
$\dot \Hc(z_t) = \nabla_z \Hc(z_t)^T \dot z_t = \nabla \Hc(z_t)^T J \nabla
\Hc(z_t) = 0$.
This conservation property is required for anything that models the energy of a
system in the physical universe, but not directly useful in optimization where
the goal is convergence to an optimum. By adding a contractive term to the
Hamiltonian flow we derive an ordinary differential equation (ODE) whose
solutions converge to a minimum of the Hamiltonian. We call the resulting flow
``Hamiltonian descent''.

In optimization there has been a lot of recent interest in continuous-time
ordinary differential equations (ODEs) that when discretized yield known or
interesting novel algorithms \cite{peypouquet2009evolution, bianchi2017constant,
condat2013primal}.  In particular Su \etal \cite{su2016differential} derived a
simple ODE that corresponds to Nesterov's accelerated gradient scheme
\cite{nesterov1983method}, see also \cite{attouch2019rate}. That work was
extended in \cite{wibisono2016variational} where the authors derived a ``Bregman
Lagrangian'' framework that generates a family of continuous-time ODEs
corresponding to several discrete-time algorithms, including Nesterov's
accelerated gradient.  This was extended in \cite{wilson2019accelerating} to
derive a novel acceleration algorithm. In \cite{wilson2016lyapunov} the authors
used Lyapunov functions to analyze the convergence properties of continuous and
discrete-time systems. There is a natural Hamiltonian perspective on the Bregman
Lagrangian, which was exploited in \cite{betancourt2018symplectic} to derive
optimization methods from symplectic integrators.

In a similar vein, the authors of \cite{maddison2018hamiltonian} used a
conformal Hamiltonian system to expand the class of functions for which linear
convergence of first-order methods can be obtained by encoding information about
the convex conjugate into a kinetic energy. Follow-up work analyzed the
properties of conformal symplectic integrators for these conformal Hamiltonian
systems \cite{francca2019conformal}.

Hamiltonian mechanics have previously been applied to several areas outside of
classical mechanics \cite{rockafellar1973saddle}, most notably in Hamiltonian
Monte Carlo (HMC), where the goal is to sample from a target distribution and
Hamiltonian mechanics are used to propose moves in a Metropolis-Hastings algorithm; see
\cite{neal2011mcmc} for a good survey.  More recently Hamiltonian mechanics has
been discussed in the context of game theory \cite{balduzzi2018mechanics}, where
a symplectic gradient algorithm was developed that converges to stable fixed
points of general games.

\subsection{The convex conjugate}
The Hamiltonian as used in physics is derived by taking the Legendre transform
(or convex conjugate) of one of the terms in the Lagrangian describing the
system, which for a function $f:\reals^n \rightarrow \reals$ is defined as
\[
  f^*(p) = \sup_x(x^Tp - f(x)).
\]
The function $f^*$ is always convex, even if $f$ is not. When $f$ is closed,
proper, and convex, then $(f^*)^* = f$, and $(\partial f)^{-1} = \partial f^*$,
where $\partial f$ denotes the subdifferential of $f$, which for differentiable
functions is just the gradient, \ie, $\partial f = \nabla f$ (or more precisely
$\partial f = \{ \nabla f\}$) \cite{rockafellar1970convex}.

\section{Hamiltonian descent}
A modification to the Hamiltonian flow equation (\ref{e-flow}) yields an
ordinary differential equation whose solutions decrease the Hamiltonian
linearly:
\begin{equation}
\label{e-hd}
\dot z_t = J \nabla \Hc(z_t) + z_\star - z_t,
\end{equation}
where $z_\star \in \argmin_z \Hc(z)$.
This departs from the standard Hamiltonian flow equations by the addition of the
term involving the difference between $z_\star$ and $z_t$. One can view the
Hamiltonian descent equation as a flow in a field consisting of the sum of a
standard Hamiltonian field and the negative gradient field of function
$(1/2)\|z_t - z_\star\|_2^2$.  Solutions to this differential equation
descend the level sets of the Hamiltonian and so we refer to (\ref{e-hd}) as
\emph{Hamiltonian descent} equations. Note that this flow is different to the
dissipative flows using conformal Hamiltonian mechanics studied in
\cite{maddison2018hamiltonian, francca2019conformal}, which are also
Hamiltonian descent methods but employ a different dissipative force. We
shall show the linear convergence of solutions of (\ref{e-hd}) to a minimum of
the Hamiltonian function; first we will state a necessary assumption:
\begin{assumption}
\label{a-cvxham}
The Hamiltonian $\Hc$ together with a point $(x_\star, p_\star) = z_\star \in \arg \min_{z} \Hc(z)$ satisfy the
following:
\BIT
\item $z_\star = \arg \min_{z} \Hc(z)$ is unique,
\item $\Hc(z) \geq \Hc(z_\star) = 0$ for all $z \in \reals^{2n}$,
\item $\Hc$ is proper, closed, convex,
\item $\Hc$ is continuously differentiable.
\EIT
\end{assumption}

\begin{theorem}
\label{t-ham}
If $z_t$ is following the equations of motion in (\ref{e-hd}) where $z_\star$
and the Hamiltonian function satisfy assumption \ref{a-cvxham}, then the
Hamiltonian converges to zero linearly (\ie, geometrically). Furthermore, $z_t$
converges to $z_\star$ and $\dot z_t$ converges to zero.
\begin{proof}
Consider the time derivative of the Hamiltonian:
\begin{equation}
	\label{eq:dotz}
  \dot \Hc(z_t) =\nabla \Hc(z_t)^T \dot z_t = \nabla\Hc(z_t)^T(J \nabla \Hc(z_t) + z_\star - z_t) \leq - \Hc(z_t).
\end{equation}
since $J$ is skew-symmetric, $\Hc(z_\star) = 0$ and $\Hc$ is
convex. Gr\"onwall's inequality \cite{gronwall1919note} then implies
that
$0 \leq \Hc(z_t) \leq \Hc(z_0) \exp(-t)$
and so $\Hc(z_t) \rightarrow 0$ linearly. Consider $M = \{z \in \reals^{2n} : \nabla \Hc(z)^T(z_\star - z) = 0\}$. It is not too hard to see that $M =  \{z_\star\}$ and that $M$ is an invariant set, since $\nabla \Hc(z_{\star}')^T(z_\star - z_{\star}') \geq \Hc(z)$ by convexity. Because $\Hc$ has a unique minimum, its sublevel set are bounded. Thus, we can apply Theorem 3.4 of \cite{slotine1991applied} (Local Invariant Set Theorem) to argue that all solutions $z_t \to z_\star$. Further, we have $\nabla \Hc(z_t) \to 0$ by continuity and thus $\dot z_t \to 0$.
\end{proof}
\end{theorem}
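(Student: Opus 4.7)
The plan is to use the Hamiltonian itself as a Lyapunov function and establish three conclusions in sequence: linear energy decay, trajectory convergence via LaSalle, and velocity convergence by continuity. The calculation driving everything is the time derivative of $\Hc$ along the flow. I would write $\dot \Hc(z_t) = \nabla \Hc(z_t)^T \dot z_t = \nabla \Hc(z_t)^T J \nabla \Hc(z_t) + \nabla \Hc(z_t)^T (z_\star - z_t)$. The first term vanishes because $J$ is skew-symmetric. For the second, the convex subgradient inequality $\Hc(z_\star) \geq \Hc(z_t) + \nabla \Hc(z_t)^T (z_\star - z_t)$ combined with $\Hc(z_\star) = 0$ gives $\nabla \Hc(z_t)^T (z_\star - z_t) \leq -\Hc(z_t)$. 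Hence $\dot \Hc(z_t) \leq -\Hc(z_t)$, and Gr\"onwall immediately yields $0 \leq \Hc(z_t) \leq \Hc(z_0)\,e^{-t}$, establishing the linear rate.

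For $z_t \to z_\star$ I would invoke LaSalle's invariance principle with $V = \Hc$. The dissipation set is $M = \{z : \nabla \Hc(z)^T(z_\star - z) = 0\}$, and the same convex inequality used above shows that on $M$ we have $\Hc(z) \leq 0$; combined with $\Hc \geq 0$ and uniqueness of the minimizer, this forces $M = \{z_\star\}$, which is trivially invariant. Applying LaSalle requires precompact trajectories, which I would obtain by showing sublevel sets of $\Hc$ are bounded: if some sublevel set contained a ray $z_\star + td$, then $t \mapsto \Hc(z_\star + td)$ would be convex, nonnegative, and bounded, hence constant equal to $0$, contradicting uniqueness of the minimizer. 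Since $\Hc(z_t)$ is nonincreasing, $z_t$ stays in the compact sublevel set $\{\Hc \leq \Hc(z_0)\}$, and LaSalle yields $z_t \to z_\star$.

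The velocity statement is then immediate: since $\Hc$ is differentiable and $z_\star$ is a minimizer, $\nabla \Hc(z_\star) = 0$; continuity of $\nabla \Hc$ gives $\nabla \Hc(z_t) \to 0$, and hence $\dot z_t = J\nabla \Hc(z_t) + (z_\star - z_t) \to 0$. The main obstacle in this plan is the LaSalle step, specifically the two lemmas needed to set it up: boundedness of sublevel sets from uniqueness of the minimizer (a standard but nontrivial recession-direction argument), and the identification of $M$ with $\{z_\star\}$. Global existence of the ODE solution is a free by-product, since once trajectories are trapped in a compact sublevel set, the continuous vector field $J\nabla\Hc(z) + z_\star - z$ is bounded there.
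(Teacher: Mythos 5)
Your proof is correct and follows essentially the same route as the paper: the Lyapunov computation $\dot\Hc \leq -\Hc$ via skew-symmetry and the convexity inequality, Gr\"onwall for the linear rate, LaSalle's invariance principle with $M=\{z_\star\}$ for trajectory convergence, and continuity of $\nabla\Hc$ for $\dot z_t \to 0$. Your write-up is if anything slightly more careful than the paper's, since you spell out the recession-direction argument for bounded sublevel sets and give a cleaner justification that $M=\{z_\star\}$.
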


In contrast, consider the gradient descent flow $\dot z_t=
-\nabla \Hc(z_t)$, which also converges since
\[
\dot \Hc(z_t) = \nabla \Hc(z_t)^T \dot z_t= -\|\nabla \Hc(z_t)\|_2^2 \leq 0.
\]
In this case, linear convergence is only guaranteed when $\Hc$ has some other
property, such as strong convexity, which Hamiltonian descent does not require.

It may appear that these equations of motion are unrealizable without knowledge
of a minimum of the Hamiltonian $z_\star$, which would defeat the goal of finding such a point. However, by a
judicious choice of the Hamiltonian we can \emph{cancel} the terms involving
$z_\star$, and make the system realizable.
For example, take the problem of minimizing convex $f:\reals^n \rightarrow
\reals$, and consider the following Hamiltonian
\[
  \Hc(x,p) = f(x) + f^*(p) - p^T x_\star,
\]
where $x_\star$ is any minimizer of $f$. Note that $(x_\star, 0) \in
\argmin_{(x,p)} \Hc(x,p)$. Assuming $f$ and $f^*$ are continuously differentiable and $(x_\star, 0)$ is a unique minimum of $\Hc$, then it is
readily verified that this Hamiltonian satisfies assumption \ref{a-cvxham}. So the solutions of the equations of motion will converge to a minimum of $\Hc$ linearly.  In
this case the flow is given by
\begin{align*}
  \dot x_t &= \nabla_p \Hc(x_t, p_t) + x_\star - x_t = \nabla f^*(p_t) - x_t\\
  \dot p_t &= -\nabla_x \Hc(x_t, p_t) + p_\star - p_t = -\nabla f(x_t) - p_t,
\end{align*}
since $p_\star = 0$, and note that theorem \ref{t-ham} implies that $\dot x_t
\rightarrow 0$, $\dot p_t\rightarrow 0$ and in the limit these equations reduce
to the optimality condition for the problem, namely $\nabla f(x) = 0$.  However,
this system requires the ability to evaluate $\nabla f^*$, which is as hard as
the original problem (since $x_\star = \nabla f^*(0)$). In the sequel we shall
exploit the structure of composite optimization problems to avoid this
requirement.

\subsection{Affine invariance}
The Hamiltonian descent equations of motion (\ref{e-hd}) are invariant to a set
of affine transformations. This property is very useful since it means that the
performance of an algorithm based on these equations will be much less sensitive
to the conditioning of the problem than, for example, gradient descent which
does not enjoy affine invariance.

To show this property, consider a non-singular matrix $M$ that satisfies $MJM^T
= J$ and consider the Hamiltonian in the new coordinate system,
\[
\bar \Hc(y) = \Hc(M^{-1}y),
\]
where clearly $y_\star = Mz_\star$. At time $\tau$ we have
the point $y_\tau$, and let $z_\tau = M^{-1} y_\tau$. Running Hamiltonian
descent in the transformed coordinates we obtain
\begin{align*}
\dot y_\tau &= J \nabla \bar \Hc(y_\tau) + y_\star - y_\tau\\
&=JM^{-T} \nabla \Hc(M^{-1} y_\tau ) + M z_\star - M z_\tau\\
&=M(J \nabla \Hc(z_\tau) + z_\star - z_\tau)\\
&=M \dot z_\tau.
\end{align*}
Now let $z_0 = M^{-1} y_0$, then we have
$y_{t} = y_0 + \int_{0}^t \dot y_\tau= Mz_0 + \int_{0}^t M\dot z_\tau= M z_{t}$
for all $t$, and therefore $\bar \Hc(y_t) = \Hc(M^{-1} M z_t) = \Hc(z_t)$,
\ie, the original and transformed Hamiltonians have exactly the same value
for all $t$ and thus the rate of convergence is unchanged by the transformation.
The condition on $M$ is not too onerous; for example any $M$ of
the form:
\[
M = \begin{bmatrix} R & 0 \\ 0 & R^{-T} \end{bmatrix}
\]
for nonsingular $R \in \reals^{n \times n}$ satisfies the condition.
Contrast this to vanilla gradient flow,
\[
  \dot y_\tau= -\nabla \bar \Hc(y_\tau) = -M^{-T} \nabla \Hc(M^{-1}y_\tau) = M^{-T} \dot z_\tau.
\]
Again setting $z_0 = M^{-1}y_0$ we obtain $y_{t} = y_0 + \int_{0}^t
\dot y_\tau= M z_0 + \int_{0}^t M^{-T} \dot z_\tau\neq M z_{t}$ except in
the case that $M^TM = I$, \ie, $M$ is orthogonal.

\subsection{Discretizations}
There are many possible ways to discretize the Hamiltonian descent equations,
see, \eg, \cite{hairer2006geometric}. Here we present two simple approaches and
prove their convergence under certain conditions. Later we shall show that other
discretizations correspond to already known algorithms.
\subsubsection{Implicit}
Consider the following implicit discretization of (\ref{e-hd}), for some
$\epsilon > 0$ we take
\begin{equation}
\label{e-implicit}
z^{k+1} = z^k + \epsilon(J \nabla \Hc(z^{k+1}) + z_\star - z^{k+1}).
\end{equation}
Consider the change in Hamiltonian value at iteration $k$, $\Delta_k = \Hc(z^{k+1}) - \Hc(z^k)$:
\[
  \Delta_k \leq \nabla \Hc(z^{k+1})^T(z^{k+1} - z^k) =\epsilon\nabla \Hc(z^{k+1})^T(J \nabla \Hc(z^{k+1}) + z_\star - z^{k+1}) \leq -\epsilon \Hc(z^{k+1})
\]
since $J$ is skew-symmetric, $\Hc(z_\star) = 0$ and $\Hc$ is
convex. From this we have $\Hc(z^{k}) \leq (1+\epsilon)^{-k} \Hc(z_0)$.  Thus
the implicit discretization exhibits linear convergence in discrete-time,
without restriction on the step-size $\epsilon$. However, this scheme is very
difficult to implement in practice, since it requires solving a non-linear
equation for $z^{k+1}$ at every step.

\subsubsection{Explicit}
Now consider the explicit discretization
\begin{equation}
\label{e-explicit}
z^{k+1} = z^k + \epsilon(J \nabla \Hc(z^{k}) + z_\star - z^{k}),
\end{equation}
this differs from the implicit discretization in that the right hand side
depends solely on $z^k$ rather than $z^{k+1}$, and therefore is much more
practical to implement. If we assume that the gradient
of $\Hc$ is $L$-Lipschitz, then we can show that this sequence converges and
that the Hamiltonian converges to zero like $O(1/k)$. The proof of this result
is included in the appendix. If, in addition, $\Hc$ is $\mu > 0$ strongly convex, then we
can show that the Hamiltonian converges to zero like $O(\lambda^k)$ for some
$\lambda < 1$. The proof of this result, along the explicit dependence of $\lambda$ on $L$ and $\mu$ is given in the appendix.

We must mention here that both proofs are somewhat
lacking.  For example, under the assumptions of $L$-Lipschitzness of $\nabla \Hc$ and $\mu$ strong convexity of $\Hc$, our analysis requires that the step-size $\epsilon$ depend on both $L$ and $\mu$. This is a stronger requirement than the classical gradient descent analysis. Moreover, the rate $\lambda$ scales poorly with the condition number $L/\mu$ as compared to gradient descent. This may be due to the fact that both analyses depend strongly on the values of $L$ or $\mu$, which are not invariant to affine transformation even though the equations of motion are. We suspect that a tighter analysis is possible under assumptions whose structure mirror the affine invariance structure of the dynamics.

\section{Composite optimization}
Now we come to the main problem we investigate in this paper.
Consider a convex optimization problem consisting of the sum of two
convex, closed, proper functions $h : \reals^n \rightarrow \reals$ and $g :
\reals^m \rightarrow \reals$:
\begin{equation}
\label{e-primal}
  \begin{array}{ll}
    \mbox{minimize} & f(y) := h(Ay) + g(y)
  \end{array}
\end{equation}
over variable $y \in \reals^m$, with data matrix $A \in \reals^{n \times m}$.
This problem is sometimes referred to as a composite optimization problem,
see, \eg, \cite{nesterov2013gradient}.
The dual problem is given by
\begin{equation}
\label{e-dual}
  \begin{array}{ll}
    \mbox{maximize} & d(p) := -h^*(-p) - g^*(A^Tp),
  \end{array}
\end{equation}
over $p \in \reals^n$.
We assume that $h$ and $g^*$ are both differentiable, which will help ensure that the
Hamiltonian we derive satisfies assumption \ref{a-cvxham}.  Weak duality
tells us that for any $y, p$ we have $f(y) \geq d(p)$, with equality if and only
if $y$ and $p$ are primal-dual optimal, since strong duality always holds for
this problem (under mild technical conditions \cite[\S 5.2.3]{boyd2004convex}).
We can rewrite the primal and dual problems in equality constrained form:
\begin{align}
\begin{split}
\label{e-pd}
  \begin{array}{ll}
    \mbox{minimize} & h(x) + g(y)\\
    \mbox{subject to} & x = Ay,
  \end{array}
  &
  \quad
  \begin{array}{ll}
    \mbox{maximize} & -h^*(-p) - g^*(q)\\
    \mbox{subject to} & q = A^Tp,
  \end{array}
\end{split}
\end{align}
and obtain necessary and sufficient optimality conditions in terms of all four
variables:
\begin{align}
\begin{split}
\label{e-optimality}
 \nabla g^*(q_\star) - y_\star &= 0\\
 Ay_\star - x_\star  &= 0\\
-\nabla h(x_\star) - p_\star &= 0\\
A^Tp_\star - q_\star &= 0,
\end{split}
\end{align}
the proof of which is included in the appendix.

\subsection{Duality gap as Hamiltonian}
In this section we derive a partial duality gap for problem (\ref{e-pd}) and use it
as our Hamiltonian function to derive equations of motion. Then we shall
show that in the limit the equations we derive satisfy the conditions necessary
and sufficient for optimality (\ref{e-optimality}).  We start by introducing
dual variable $p$ for the equality constraint in the primal problem
(\ref{e-pd}) to obtain $h(x) + g(y) + p^T(x - Ay)$, and taking the Legendre
transform of $g$ we get the `full' Lagrangian in terms of all four primal and
dual variables:
\[
\Lc(x,y,p,q) = h(x) - g^*(q) + y^Tq + p^T(x - Ay),
\]
which is convex-concave in $(x,y)$ and $(p,q)$. We refer to this as the full
Lagrangian, because if we maximize over $(p,q)$ we recover the primal problem in
(\ref{e-pd}) and if we minimize over $(x,y)$ we recover the dual problem in
(\ref{e-pd}).  Denote by $(y_\star, p_\star)$ any primal-dual optimal point and
let $x_\star = Ay_\star$, $q_\star = A^Tp_\star$, and $f_\star = f(y_\star) =
d(p_\star)$, then a simple calculation yields
\[
\Lc(x_\star, y_\star, p,q) \leq\max_{p,q} \Lc(x_\star, y_\star, p,q) = f_\star
= \min_{x,y} \Lc(x,y,p_\star, q_\star) \leq \Lc(x,y,p_\star, q_\star).
\]
This is due to strong duality holding for this problem.  In other words, if we substitute in
the optimal primal or dual variables into the Lagrangian, then we obtain valid lower
and upper bounds respectively. Then maximizing and minimizing these bounds
over the remaining variables yields the optimal objective value, $f_\star$. Thus, the
difference between these two functions is a partial \emph{duality gap} (though
uncomputable without knowledge of a primal-dual optimal point),
\begin{align}
\begin{split}
\label{e-dual-gap}
  \mathrm{gap}(x,q) &= \Lc(x,y,p_\star, q_\star) - \Lc(x_\star, y_\star, p, q) \\
  &= h(x) -h(x_\star) + g^*(q) -g^*(q_\star) + x^Tp_\star - q^Ty_\star\\
  &\geq 0,
\end{split}
\end{align}
with equality only when the Lagrangians are equal, \ie, are optimal. Note that
the gap only depends on $x,q$, because the effect of $y$ and $p$ is cancelled
out. This gap can also be written in terms of Bregman divergences,
where the Bregman divergence between points $u$ and
$v$ induced by a differentiable convex function $h$ is defined as
$D_h(u, v) = h(u) - h(v) - \nabla h(v)^T(u - v)$,
which is always nonnegative due the convexity of $h$. Though not a true
distance metric, it does have some useful `distance-like' properties
\cite{bregman1967relaxation, bauschke1997legendre}.  We show in the appendix
that our partial duality gap can be rewritten as
\[
\mathrm{gap}(x,q) = D_h(x,x_\star) + D_{g^*}(q, q_\star).
\]
In other words, the gap also corresponds to a `distance' between the current
iterates and their optimal values, as induced by the functions $h$ and $g^*$.
Furthermore, we show in the appendix that this partial duality gap is a lower
bound on the full duality gap, \ie,
\[
f(y) - d(p) \geq \mathrm{gap}(Ay, A^Tp).
\]
The gap is not in the form of a Hamiltonian, since the variable $x$
and $q$ are of different dimension. We can reparameterize $q = A^Tp$ or
$x = Ay$, which yields two possible Hamiltonians, one in dimension $n$ and one
in dimension $m$. The first of which is
\begin{equation}
\label{e-ham1}
  \Hc(x,p) = \mathrm{gap}(x, A^Tp) = h(x) -h(x_\star) + g^*(A^Tp)
  -g^*(A^Tp_\star) + x^Tp_\star - p^Tx_\star.
\end{equation}
Due to the assumptions on $h$ and $g^*$ we know that $\Hc$ is convex and
differentiable, and evidently $\Hc(x, p) \geq \Hc(x_\star, p_\star)
= 0$. This Hamiltonian function combined with the
equations of motion in equation~(\ref{e-hd}) yields dynamics
\begin{align}
\begin{split}
\label{e-dyn1}
  \dot x_t &= \nabla_p \Hc(x_t, p_t) + x_\star - x_t = A\nabla g^*(A^Tp_t)- x_t\\
  \dot p_t &= -\nabla_x \Hc(x_t, p_t) + p_\star - p_t = -\nabla h(x_t) - p_t.
\end{split}
\end{align}
We could rewrite these equations as
\begin{align*}
\nabla g^*(q_t) - y_t &=0\\
Ay_t - x_t &=\dot x_t\\
-\nabla h(x_t) - p_t &=\dot p_t\\
A^T p_t  - q_t &=0,
\end{align*}
If $\dot x_t \rightarrow 0$ and
$\dot p_t \rightarrow 0$, then the above equations converge to the conditions
necessary and sufficient for optimality, as given in
equation (\ref{e-optimality}). This convergence could be guaranteed by theorem~\ref{t-ham}, when $\Hc$ has a unique minimum (and thus satisfies all of assumption \ref{a-cvxham}). Still, we suspect it is possible to prove the convergence of the system without this requirement on $\Hc$'s minima.

The second Hamiltonian is given by
\begin{equation}
\label{e-ham2}
  \Hc(y,q) = \mathrm{gap}(Ay, q) = h(Ay) -h(Ay_\star) + g^*(q)
  -g^*(q_\star) + y^Tq_\star - q^Ty_\star
\end{equation}
which yields equations of motion
\begin{align}
\begin{split}
\label{e-dyn2}
  \dot y_t &= \nabla_q \Hc(y_t, q_t) + y_\star - y_t =\nabla g^*(q_t)- y_t\\
  \dot q_t &= -\nabla_y \Hc(y_t, q_t) + q_\star - q_t =-A^T\nabla h(Ay_t) - q_t,
\end{split}
\end{align}
or equivalently
\begin{align*}
\nabla g^*(q_t) - y_t &=\dot y_t\\
Ay_t - x_t &=0\\
-\nabla h(x_t) - p_t &=0\\
A^T p_t  - q_t &=\dot q_t.
\end{align*}
Again, if $\dot y_t \rightarrow 0$ and
$\dot q_t \rightarrow 0$, this system will also satisfy the optimality conditions of~(\ref{e-optimality}).
Finally, theorem~\ref{t-ham} implies that both of these ODEs exhibit linear
convergence of the Hamiltonian, \ie, linear convergence of the partial duality gap
(\ref{e-dual-gap}), to zero.

\section{Connection to other methods}

\subsection{ADMM}
In this section we show how a particular discretization of our ODE yields the
well-known Alternating direction method of multipliers algorithm (ADMM)
\cite{boyd2011distributed, heyuan2012} when applied to problem (\ref{e-primal}).
We should note that in related work the authors of \cite{franca2018admm} derive
a different ODE that when discretized also yields ADMM, as well as a related ODE
that corresponds to accelerated ADMM \cite{goldstein2014fast}.  There is no
contradiction here since many ODEs can correspond to the same procedure when
discretized.

In order to prove that ADMM is equivalent to a discretization of Hamiltonian
descent we will require the generalized Moreau decomposition, which we present
next. In the statement of the lemma we use the notation $(A \partial
f A^T)$ to represent the multi-valued operator defined as $(A \partial
f A^T)(x) = A (\partial f (A^Tx)) = \{Az \mid z \in \partial f(A^Tx) \}$.
{
\begin{lemma} For convex, closed, proper function $f : \reals^m
\rightarrow \reals$ and matrix $A\in\reals^{n \times m}$, any point
$x \in \reals^n$ satisfies
\[
x = (I + \rho A \partial f A^T)^{-1} x + \rho A(\partial f^* + \rho A^TA)^{-1} A^Tx.
\]
\end{lemma}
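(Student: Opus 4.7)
The plan is to mimic the derivation of the classical Moreau decomposition, but with the composed operator $A\partial f A^T$ in place of $\partial f$, and use the conjugacy identity $(\partial f)^{-1} = \partial f^*$ to trade the inner $\partial f$ for $\partial f^*$.

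First, I introduce $u \in (I + \rho A \partial f A^T)^{-1} x$, which by definition of the resolvent means that $x - u \in \rho A \partial f(A^T u)$. Unpacking this, there exists some $w \in \partial f(A^T u)$ such that $x - u = \rho A w$. The aim is to identify this same $w$ with an element of $(\partial f^* + \rho A^T A)^{-1}(A^T x)$, at which point the identity $x = u + \rho A w$ becomes exactly the claimed decomposition.

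Second, I apply the standard conjugacy fact $(\partial f)^{-1} = \partial f^*$ (valid because $f$ is closed, proper, convex) to rewrite $w \in \partial f(A^T u)$ as $A^T u \in \partial f^*(w)$. Substituting $u = x - \rho A w$ gives $A^T x - \rho A^T A w \in \partial f^*(w)$, or equivalently $A^T x \in \partial f^*(w) + \rho A^T A w = (\partial f^* + \rho A^T A)(w)$. Hence $w \in (\partial f^* + \rho A^T A)^{-1}(A^T x)$, and plugging $\rho A w$ into $x = u + \rho A w$ yields the stated equation.

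The main obstacle is that the operators involved are in general set-valued, so I need to argue that an element $u$ of $(I + \rho A \partial f A^T)^{-1} x$ actually exists for every $x \in \reals^n$ (so that the identity is not vacuous). This follows from maximal monotonicity of $A \partial f A^T$ together with Minty's theorem, which guarantees that $I + \rho A \partial f A^T$ is surjective; I would cite this standard fact rather than reprove it. With existence settled, the conjugacy substitution above produces a corresponding element of the second resolvent, and set-valued interpretation of the equation is then unambiguous since the sum $u + \rho A w$ equals $x$ by construction.
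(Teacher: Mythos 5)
Your chain of implications is sound and is essentially the paper's argument run in reverse: the paper starts from $y \in (\partial f^* + \rho A^T A)^{-1}A^Tx$ and uses the conjugacy fact $x \in \partial f^*(x^*) \Leftrightarrow x^* \in \partial f(x)$ to deduce $x \in (I + \rho A\partial f A^T)(x - \rho Ay)$, whereas you start from the first resolvent and push forward to the second. The conjugacy step and the algebra are identical in the two directions.

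The genuine gap is in your final paragraph. The lemma asserts an equality of \emph{points}, but $(\partial f^* + \rho A^T A)^{-1}A^Tx$ is in general a set, and your argument only exhibits \emph{one} element $w$ of it satisfying $\rho A w = x - u$. For the displayed identity to be meaningful as stated (and usable downstream: in the ADMM derivation it is applied as a substitution rule, so the output must be well defined), you must also show that $A$ sends every element of $(\partial f^* + \rho A^T A)^{-1}A^Tx$ to the same point, that is, that $\rho A(\partial f^* + \rho A^TA)^{-1}A^Tx$ is single-valued. The paper devotes the first half of its proof to exactly this: for any $y,z \in (\partial f^* + \rho A^TA)^{-1}q$ it adds the two subgradient inequalities to obtain $0 \geq \rho\|A(y-z)\|_2^2$, hence $Ay = Az$. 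Your remark that the set-valued reading is ``unambiguous since the sum equals $x$ by construction'' only establishes the identity for your particular selection, not for an arbitrary one. A smaller point: your appeal to Minty's theorem for existence requires $A\partial f A^T$ to be \emph{maximal} monotone, which does hold here because $f$ is finite-valued (so the constraint qualification giving $\partial(f\circ A^T) = A\partial f A^T$ is automatic); the paper sidesteps this by identifying the first resolvent with the proximal operator of $f\circ A^T$, which yields existence and single-valuedness of that term in one stroke.
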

\addtocounter{lemma}{-1}
}
We defer the proof to the appendix.
To derive ADMM we employ a standard trick in
discretizing differential equations: We add and subtract a term to the dynamics
which we shall discretize at different points, which in the limit of
infinitesimal step size will vanish, recovering the original ODE\@. Starting
from equation (\ref{e-dyn1}) and for any $\rho > 0$ the modified ODE is
\begin{align*}
  \dot p_t &= -\nabla h(x_t) - p_t - \rho(x_t - x_t)\\
  \dot x_t &= A\nabla g^*(A^T p_t) - x_t + (1/\rho)(p_t - p_t).
\end{align*}
Now we discretize as follows:
\begin{align*}
  (p^{k} - p^{k-1}) / \epsilon &= -\nabla h(x^{k+1}) -p^k - \rho(x^{k+1} - x^{k})\\
  (x^{k+1} - x^k) / \epsilon &= A\nabla g^*(A^T p^{k+1}) - x^k + (1/\rho)(p^{k+1} - p^k).
\end{align*}
Setting $\epsilon = 1$ yields
\begin{align*}
  x^{k+1} &= (\rho I + \nabla h)^{-1}(\rho x^{k} - 2 p^{k} + p^{k-1}) \\
  p^{k+1} &= (I + \rho A \nabla g^* A^T)^{-1}(p^k + \rho x^{k+1})\\
  &= p^k + \rho x^{k+1} - \rho A(\partial g + \rho A^TA)^{-1}A^T(p^k + \rho x^{k+1})\\
  &= p^k + \rho x^{k+1} - \rho A y^{k+1}
\end{align*}
where we used the generalized Moreau decomposition and introduced variable
sequence $y^k \in \reals^m$, and note that from the last equation we have that
$\rho x^{k} - 2 p^{k} + p^{k-1} = \rho Ay^{k} - p^{k}$. Finally this brings us
to ADMM; from any initial $y^0, p^0$ iterate
\begin{align*}
  x^{k+1} &= (\rho I + \nabla h)^{-1}(\rho Ay^{k} - p^{k})\\
  y^{k+1} &\in (\rho A^TA + \partial g)^{-1}A^T(p^k + \rho x^{k+1}) \\
  p^{k+1} &= p^k + \rho(x^{k+1} - Ay^{k+1}).
\end{align*}
Evidently we have lost the affine invariance property of our ODE. However we might
expect ADMM to be somewhat more robust to conditioning than gradient descent,
which appears to be true empirically \cite{boyd2011distributed}.

\subsection{PDHG}
The primal-dual hybrid gradient technique (PDHG), also called Chambolle-Pock, is
another operator splitting technique with a slightly different form to ADMM. In
particular, PDHG only requires multiplies with $A$ and $A^T$ rather than
requiring $A$ in the proximal step \cite{zhu2008efficient, esser2009general, chambolle2011first}.
When applied to problem (\ref{e-primal}) PDHG yields the following iterates
\begin{align*}
p^{k+1} &=-(I + \rho \partial h^*)^{-1}(\rho Ay^{k} - p^k)\\
y^{k+1} &= (I + \sigma \partial g)^{-1}(\sigma A^Tp^{k+1} + y^k).
\end{align*}
In the appendix we show that this corresponds to a particular discretization
of Hamiltonian descent, with step size $\epsilon = 1$.
Note that the sign of the dual variable $p^k$ is different when
compared to \cite{chambolle2011first}, this is due to the fact that the dual
problem they consider negates the dual variable when compared to ours, so this
is fixed by rewriting the iterations in terms of $-p^k$.

\section{Numerical experiments}
In this section we present two numerical examples where we compare the explicit
discretization of Hamiltonian descent flow to gradient descent. Due to the
affine invariance property of Hamiltonian descent we expect our technique to
outperform when the conditioning of the problem is poor, so we generate examples
with bad conditioning to test that.
\subsection{Regularized least-squares}
Consider the following $\ell_2$-regularized least-squares problem
\begin{equation}
\label{e-ls}
\begin{array}{ll}
\mbox{minimize} & (1/2)\|Ay - b\|_2^2 + (\lambda/2) \|By\|_2^2,
\end{array}
\end{equation}
over variable $y \in \reals^m$, where $A \in \reals^{n \times m}$, $B \in
\reals^{m \times m}$, and $\lambda \geq 0$ are data.
In the notation of problem (\ref{e-primal}) we let $h(x) = (1/2)\|x - b\|_2^2$
and $g(y) = \lambda \|By\|_2^2$, and so $\nabla g^*(q) = \argmax_y( y^T
q - \lambda \|B y\|_2^2$) which we assume is always well-defined (\ie, $B^TB$ is
invertible).  We apply the explicit discretization (\ref{e-explicit}) of the
dynamics given in equation (\ref{e-dyn2}) to this problem.  To demonstrate the
practical effect of affine invariance, we randomly generate a nonsingular matrix
$M$ and solve a sequence of optimization problems where $A$ is replaced with
$\hat A_j = AM^j$ and $B$ is replaced with $\hat B_j = BM^j$ for
$j=0,1,\ldots,j^\mathrm{max}$. Note that the optimal objective value of this
perturbed problem is unchanged from the original, and the solution for each
perturbed problem can be obtained by $(\hat y_\star)_j  = M^{-j} y_\star$,
where $y_\star$ solves the original problem (\ie, with $j=0$).  However, the
conditioning of the problem is changed - $M$ is selected so that the
conditioning of the data is worsening for increasing $j$.  We compare our
algorithm to vanilla gradient descent, to proximal gradient descent
\cite{parikh2014proximal} (where the prox-step is on the $g$ term so it is of
a similar cost to our method), and to restarted accelerated gradient descent
\cite{nesterov1983method, o2015adaptive}, and observe the effect of the
worsening conditioning.

We chose $n = 1000$ and for simplicity we chose $B=I$, $\lambda = 1$, and randomly
generated each entry in $A$ to be IID $\mathcal{N}(0, 1)$.  The best step size
was chosen via exhaustive search for all three algorithms.  The matrix $M$ was
randomly generated but chosen in such a way so as to be close to the identity.
For $j=0$ the condition number of the matrix $\hat A_j^T \hat A_j + \lambda \hat
B_j^T \hat B_j$ was $4.0 \times 10^3$, and for $j=j^\mathrm{max} = 20$ the condition number
had grown to $2.2 \times 10^{14}$, a dramatic increase.  Figure
(\ref{f-quad_primal}) shows the performance of both our technique and gradient
descent on this sequence of problems. The gradient descent traces are in orange,
with a different trace for each $j$. The fastest converging trace corresponds to
$j=0$, the best conditioned problem. As the conditioning deteriorates the
convergence is impacted, getting slower with each increase in $j$. In the
appendix we additionally include Figure~\ref{f-other_grads} which compares our
technique to proximal gradient, restarted accelerated gradient, and conjugate gradient.
All three additional
techniques display the same deterioration as the conditioning worsens.  By
problem $j=20$ no variant of gradient descent or conjugate gradient has reduced
the primal objective error, defined as $\min_k(f(y^k) - f_\star)$, to under
$O(100)$.  By contrast, our technique is completely \emph{unaffected} by the
changing data, with every trace essentially identical (up to some numerical
tolerances).  Furthermore, we used the exact same step size for every run of our
method.  This is because the discretization procedure preserved the affine
invariance of the continuous ODE it is approximating, so the changing
conditioning of the data has no effect.  In Figure (\ref{f-ham-gap}) we plot the
Hamiltonian (\ref{e-ham2}) (\ie, the partial duality gap) and the full duality
gap: $f(y^k) - d(p^k)$, for Hamiltonian descent for each value of $j$.  Once
again the traces lie directly on top of each other, until numerical errors start
to have an impact.  We note that the Hamiltonian decreases at each iteration,
and converges linearly. The duality gap and the objective values do not
necessarily decrease at each iteration, but do appear to enjoy linear
convergence for each $j$.

\begin{figure}[ht]
\centering
\begin{subfigure}{0.47\textwidth}
\centering
\includegraphics[width=\linewidth]{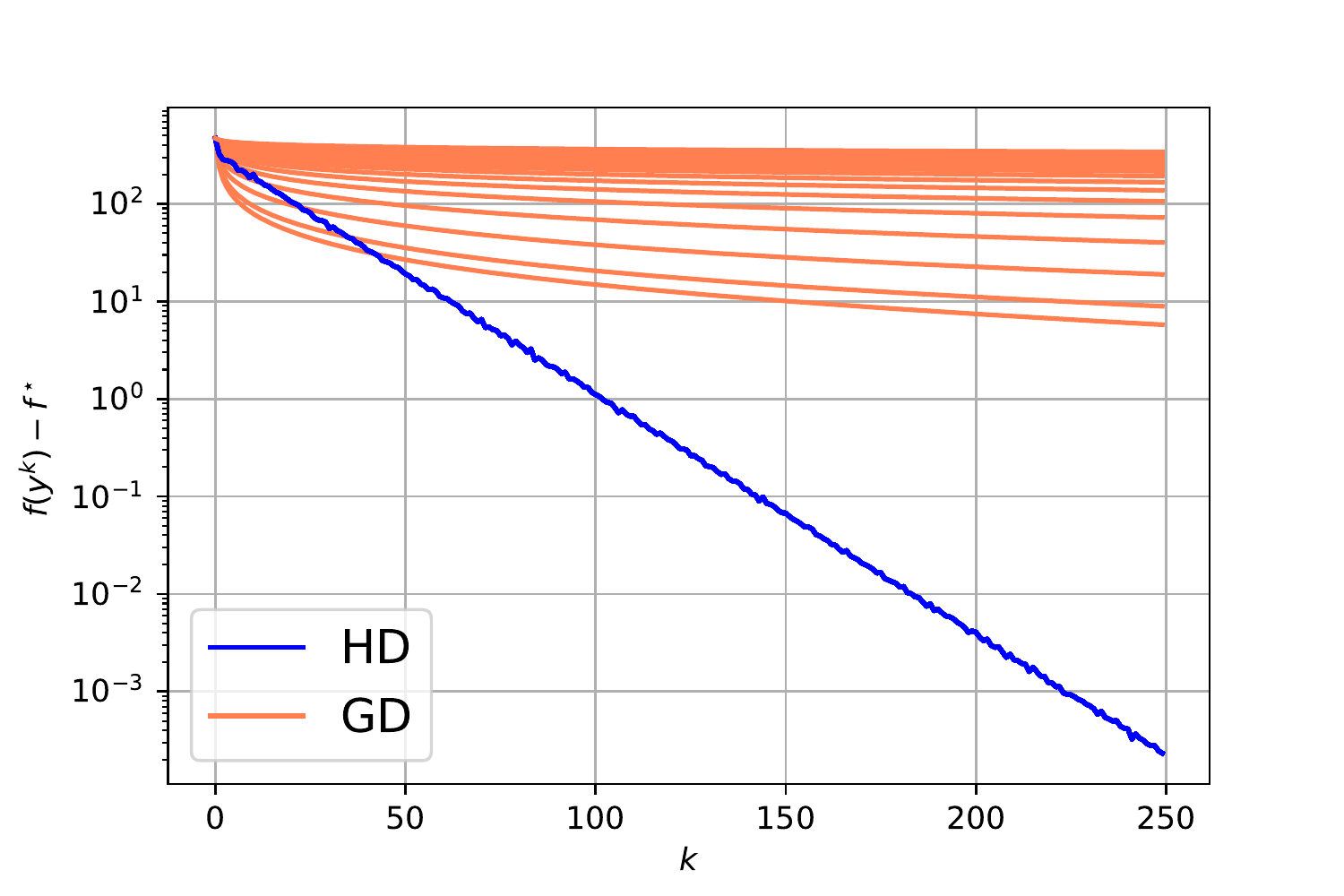}
\caption{Primal objective value.}
\label{f-quad_primal}
\end{subfigure}
\begin{subfigure}{0.47\textwidth}
\centering
\includegraphics[width=\linewidth]{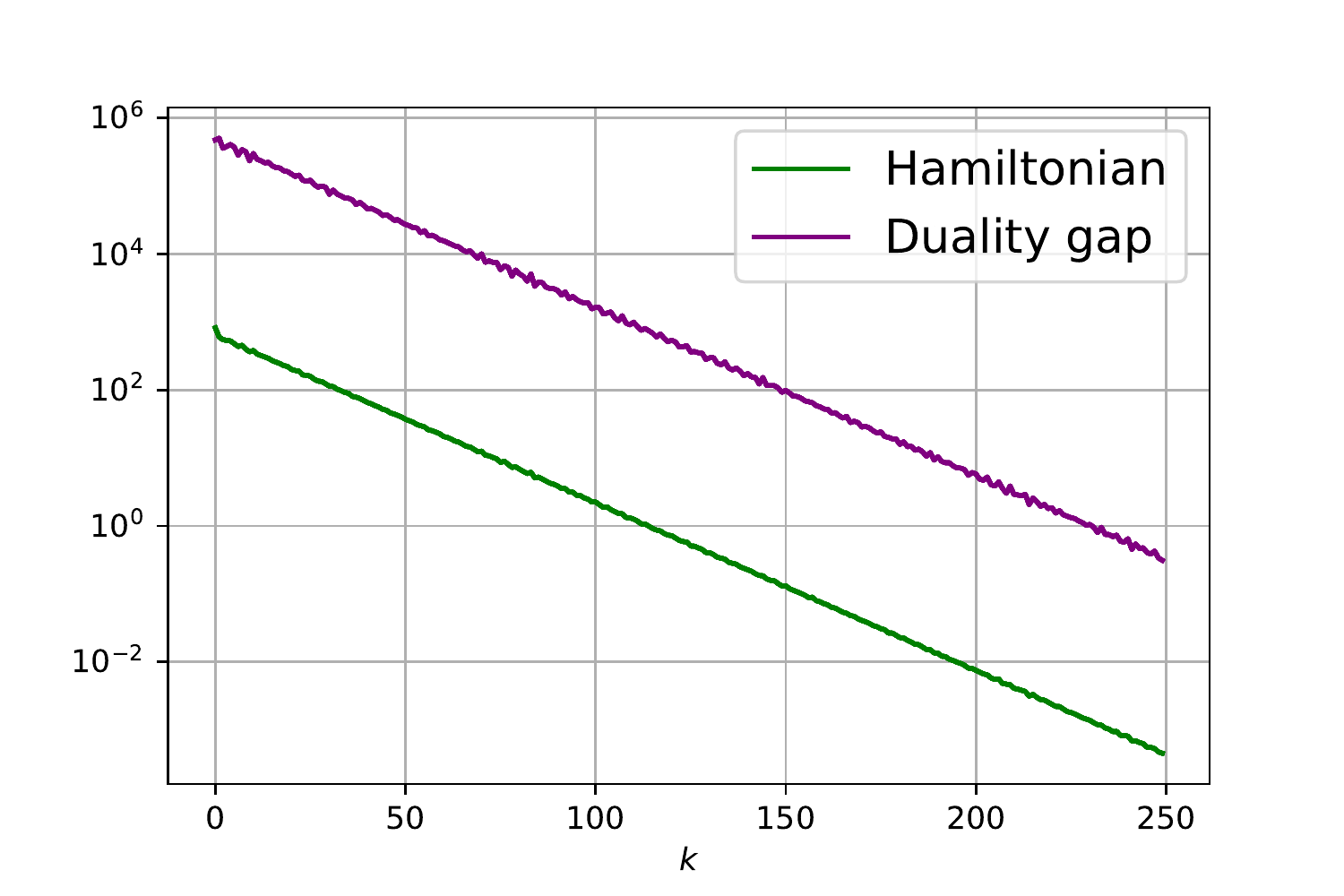}
\caption{Hamiltonian value and duality gap for HD.}
\label{f-ham-gap}
\end{subfigure}
\caption{Comparison of Hamiltonian descent (HD) and Gradient descent (GD) for
problem (\ref{e-ls}).} \label{f-quad}
\end{figure}

\subsection{Elastic net regularized logistic regression}
In logistic regression the goal is to learn a classifier to separate
a set of data points based on their labels, which we take to be either $1$ or
$-1$. The elastic net is a type of regularization that promotes sparsity and
small weights in the solution \cite{zou2005regularization}. Given data points
$a_i \in \reals^m$ with corresponding label $l_i \in \{-1, 1\}$ for
$i=1,\ldots,n$, the elastic net regularized logistic regression problem is given
by
\begin{equation}
\label{e-logreg}
\begin{array}{ll}
\mbox{minimize} & (1/n)\sum_{i=1}^n \log(1 + \exp(l_i a_i^T y)) + \lambda_1\|y\|_1 + (\lambda_2/2) \|y\|_2^2
\end{array}
\end{equation}
over the variable $y \in \reals^m$, where $\lambda_1 \geq 0$, and $\lambda_2 \geq 0$
control the strength of the regularization.
In the notation of problem (\ref{e-primal}) we take $h(x) = (1/n) \sum_{i=1}^n
\log(1 + \exp(l_i x_i))$ and $g(y) = \lambda_1\|y\|_1 + (\lambda_2/2)\|y\|_2^2$.
We have a closed form expression for the gradient of $g^*$ given by
the soft-thresholding operator:
\[
(\nabla g^*(q))_i = (1/\lambda_2) \left\{
\begin{array}{ll}
q_i - \lambda_1 & q_i \geq \lambda_1 \\
0 & |q_i| \leq \lambda_1 \\
q_i + \lambda_1 & q_i \leq -\lambda_1.
\end{array}
\right.
\]
We compare the explicit discretization (\ref{e-explicit}) of Hamiltonian descent
in equation (\ref{e-dyn2}) to proximal gradient descent \cite{parikh2014proximal},
which in this case has the exact same per-iteration cost since it also relies
on taking the gradient of $h$ and applying the soft-thresholding operator.
We chose dimension $m=500$ and $n=1000$ data points and we set $\lambda_1 =
\lambda_2 = 0.01$.  The data were generated
randomly, and then perturbed so as to give a high condition number, which was
$1.0 \times 10^8$.  The best step size for both algorithms was found using
exhaustive search.  In Figure \ref{f-log_reg_ham_v_gd} we show the primal
objective value error for both algorithms, where the true solution was
found using convex cone solver SCS \cite{ocpb:16, scs}.  Hamiltonian descent
dramatically outperforms gradient descent on this problem, despite
having the same per-iteration cost. This is unsurprising because we would expect
Hamiltonian descent to be less sensitive to the poor conditioning of the data,
due to the affine invariance property.

\begin{figure}
\centering
\includegraphics[width=0.5\linewidth]{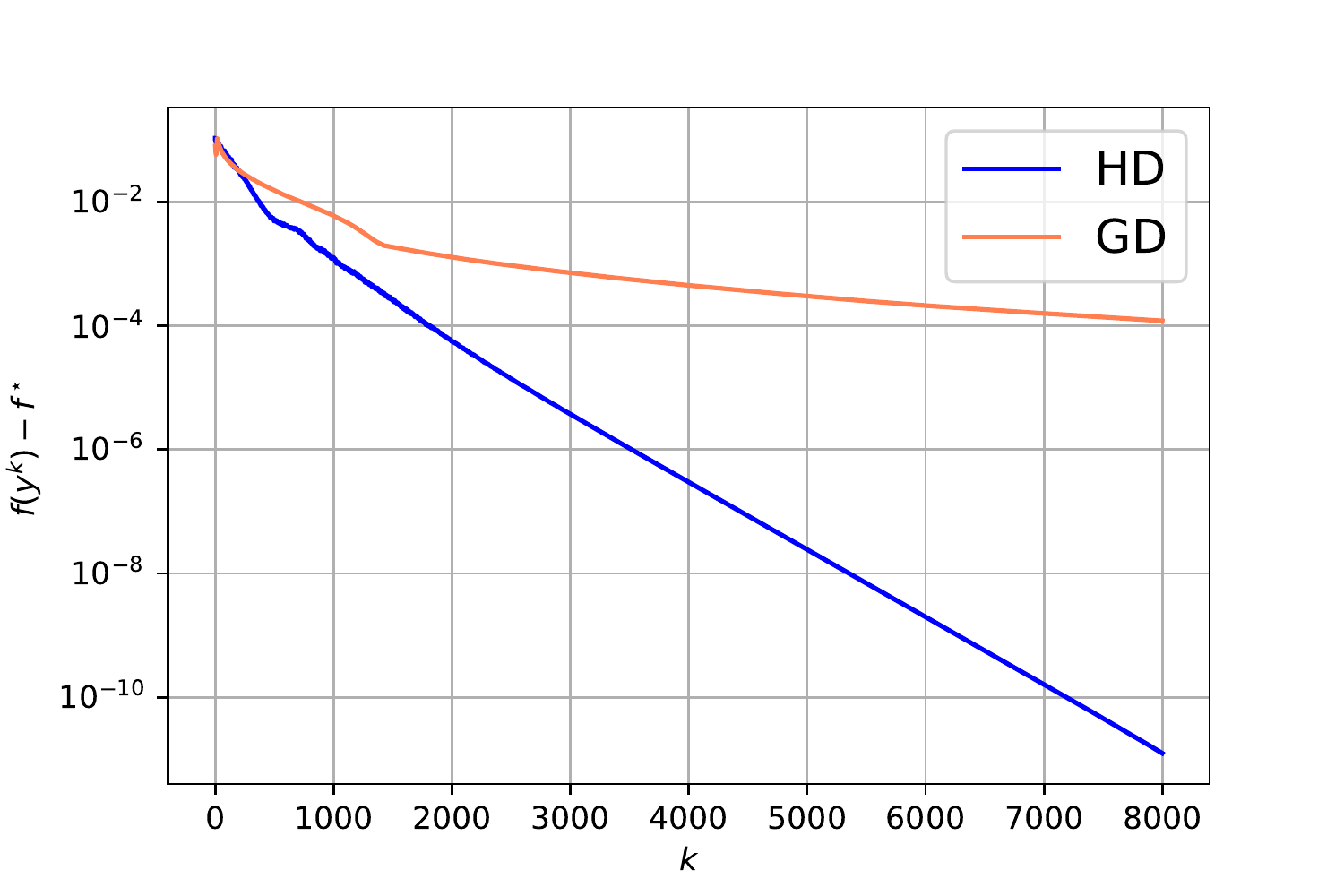}
\caption{Comparison of Hamiltonian descent (HD) and Gradient descent (GD) for
problem (\ref{e-logreg}).}
\label{f-log_reg_ham_v_gd}
\end{figure}

\section{Conclusion} Starting from Hamiltonian mechanics in classical physics,
we derived a Hamiltonian descent continuous ODE that converges linearly to a
minimum of the Hamiltonian function. We applied Hamiltonian descent to a convex
composite optimization problem, and proved linear convergence of the duality
gap, a measure of how far from optimal a primal-dual point is. In some sense
applying Hamiltonian descent to this problem is natural, since we can identify
one of the terms in the objective as being the `potential' energy and the other
as the `kinetic' energy.  We provided two discretizations that are guaranteed to
converge to the optimum under certain assumptions, and also demonstrated that
some well-known algorithms correspond to other discretizations of our ODE. In
particular we show that a particular discretization yields ADMM. We conclude
with two numerical examples that show our method is much more robust to
numerical conditioning than standard gradient methods.


\bibliographystyle{unsrt}
\bibliography{hamiltonian}
\newpage
\section*{Appendix}
\subsection*{Necessary and sufficient conditions for optimality of (\ref{e-pd})}
Recall that the primal-dual problems we are considering are:
\begin{align*}
  \begin{array}{ll}
    \mbox{minimize} & h(x) + g(y)\\
    \mbox{subject to} & x = Ay,
  \end{array}
  &
  \quad
  \begin{array}{ll}
    \mbox{maximize} & -h^*(-p) - g^*(q)\\
    \mbox{subject to} & q = A^Tp,
  \end{array}
\end{align*}
over primal variables $y \in \reals^m$, $x \in \reals^n$, dual variables
$p \in \reals^n$, $q \in \reals^m$, where matrix $A \in \reals^{n \times m}$ is
data and the functions $h : \reals^n \rightarrow \reals \cup \{\infty\}$ and $g
: \reals^m \rightarrow \reals \cup \{ \infty \}$ are convex, with convex
conjugates $h^*$ and $g^*$ respectively. The necessary and sufficient condition
for optimality of $y_\star$ for the primal problem is
\[
0 \in A^T \partial h (Ay_\star) + \partial g(y_\star).
\]
We can rewrite the optimality condition as, $\exists p_\star \in - \partial h(A y_\star)$,
\begin{align}
\begin{split}
\label{e-app-kkt1}
0 &\in -A^Tp_\star + \partial g(y_\star).
\end{split}
\end{align}
Note we have the following property for any proper convex
$f$ (see Theorem 23.5 of \cite{rockafellar1970convex}).
For any $x^*, x \in \reals^m$,
\begin{align}
x \in \partial f^*(x^*) \qquad \text{iff} \qquad x^* \in \partial f(x)
\end{align}
Now using this fact, for any $p_\star$ satisfying \eqref{e-app-kkt1} we get the following two inclusions
\begin{align}
\begin{split}
Ay_\star &\in A\partial g^*(A^Tp_\star)\\
Ay_\star &\in \partial h^*(-p_\star)\\
\end{split}
\end{align}
Together, this implies
\[
0 \in -\partial h^*(-p_\star) + A\partial g^*(A^T p_\star),
\]
which is the necessary and sufficient condition for $p_\star$ to be optimal for
the dual problem. Then taking primal-dual optimal $(y_\star, p_\star)$ and introducing $x_\star = A y_\star$ and $q_\star = A^T p_\star$,
we get
\begin{align*}
 y_\star &\in \partial g^*(q_\star)\\
 x_\star &= Ay_\star\\
 -p_\star &\in \partial h(x_\star)\\
 q_\star &= A^Tp_\star,
\end{align*}
which are the necessary and sufficient conditions for $(x_\star, y_\star,
p_\star, q_\star)$ to be primal-dual optimal for (\ref{e-pd}). In the main
text we assumed that $h$ and $g^*$ were differentiable, in which case we
can replace subdifferentials with gradients, and inclusion
with equality.

\subsection*{Relationship between the duality gap and Bregman divergences}
Starting with the definition of Bregman divergences and noting that $\nabla
h(x_\star) = -p_\star$,
\begin{align*}
D_h(x, x_\star)
&=h(x) - h(x_\star) + p_\star^T(x - x_\star),
\end{align*}
and similarly using $\nabla g^*(q_\star) = y_\star$,
\begin{align*}
D_{g^*}(q, q_\star)
&=g^*(q) - g^*(q_\star) - y_\star^T(q - q_\star).
\end{align*}
Using $p_\star^T x_\star = p_\star^T(A y_\star) = q_\star^Ty_\star$
and summing the two Bregman divergences yields the gap (\ref{e-dual-gap}).

Now let us define
\begin{align*}
\hat D_{h^*}(-p,-p_\star)
&=h^*(-p) - h^*(-p_\star) - x_\star^T(-p + p_\star),
\end{align*}
and
\begin{align*}
\hat D_{g}(y,y_\star)
&=g(y) - g(y_\star) - q_\star^T(y - y_\star),
\end{align*}
which are both nonnegative due to the convexity of $h^*$ and $g$,
and note that if $h^*$ and $g$ are differentiable then these are just
Bregman divergences, in which case we could drop the `hat' notation.

Now we shall show that the usual duality gap can be decomposed into the sum of
four (pseudo)-Bregman divergences.
Let $\hat D_f(y, y_\star) = f(y) - f(y_\star)$, which by the linearity of
(pseudo)-Bregman divergences satisifes
\[
\hat D_f(y, y_\star) = \hat D_{h \circ A + g}(y, y_\star) = D_h(Ay, x_\star) + \hat D_g(y, y_\star)
\]
and similarly denoting $\hat D_d(p, p_\star) = -d(p) + d(p_\star)$ we have
\[
\hat D_d(p, p_\star) = \hat D_{h^*}(-p, -p_\star) + D_{g^*}(A^Tp, q_\star).
\]
Summing these and using the fact that strong duality implies that $f(y_\star) = d(p_\star)$ we obtain
\[
f(y) - d(p) =  D_h(Ay, x_\star) + \hat D_g(y, y_\star) + \hat D_{h^*}(-p, -p_\star) + D_{g^*}(A^Tp, q_\star),
\]
which, due to the nonnegativity of $\hat D_{h^*}$ and $\hat D_g$, implies that
\[
f(y) - d(p) \geq D_h(Ay, x_\star) + D_{g^*}(A^Tp, q_\star) = \mathrm{gap}(Ay, A^Tp).
\]

\subsection*{Proof of generalized Moreau decomposition}
\begin{lemma}
Given a convex, closed, proper function $f : \reals^m
\rightarrow \reals$, matrix $A\in\reals^{n \times m}$, and $\rho > 0$. Any point
$x \in \reals^n$ satisfies
\[
x = (I + \rho A \partial f A^T)^{-1} x + \rho A(\partial f^* + \rho A^TA)^{-1} A^Tx.
\]
\end{lemma}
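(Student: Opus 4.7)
The plan is to unravel each side of the identity using the basic conjugacy relation $w \in \partial f(a) \iff a \in \partial f^{*}(w)$, which the excerpt records as $(\partial f)^{-1} = \partial f^{*}$ for closed, proper, convex $f$. The argument will show that a single vector $w$ witnesses both terms of the claimed decomposition.

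First I would pick any $u \in (I + \rho A\,\partial f\,A^{T})^{-1}(x)$, which by definition means $x \in u + \rho A\,\partial f(A^{T} u)$. Extracting a witness from this set inclusion gives some $w \in \partial f(A^{T} u)$ with
\[
x - u = \rho A w.
\]
Thus $x$ has already been split as $u + \rho A w$, and the remaining task is to show that the second summand $\rho A w$ lies in $\rho A(\partial f^{*} + \rho A^{T} A)^{-1}(A^{T} x)$, i.e.\ that $w \in (\partial f^{*} + \rho A^{T} A)^{-1}(A^{T} x)$.

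Second, I would flip $w \in \partial f(A^{T} u)$ into $A^{T} u \in \partial f^{*}(w)$ via conjugacy, then substitute $A^{T} u = A^{T} x - \rho A^{T} A w$ (which follows from $x - u = \rho A w$ after left-multiplying by $A^{T}$) to obtain
\[
A^{T} x - \rho A^{T} A w \in \partial f^{*}(w), \quad \text{i.e.,} \quad A^{T} x \in (\partial f^{*} + \rho A^{T} A)(w).
\]
Inverting this relation gives exactly $w \in (\partial f^{*} + \rho A^{T} A)^{-1}(A^{T} x)$, so $\rho A w \in \rho A(\partial f^{*} + \rho A^{T} A)^{-1}(A^{T} x)$, and combined with $x = u + \rho A w$ this is the decomposition asserted by the lemma.

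The only real obstacle is set-valued bookkeeping: the ``inverses'' on the right-hand side are in general multi-valued relations, so one must take care that the \emph{same} $w$ serves as a witness for both terms. The conjugacy identity is precisely what arranges this matching, which is why the proof collapses to a few lines. Under the lemma's hypotheses, $\partial f$ is maximal monotone, hence so is $\rho A\,\partial f\,A^{T}$, and its resolvent $(I + \rho A\,\partial f\,A^{T})^{-1}$ is single-valued on $\reals^{n}$; under this reading the statement holds as an equality in $\reals^{n}$, with the second term being read as the (necessarily nonempty) image whose appropriate element is $\rho A w$.
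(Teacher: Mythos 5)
Your core argument is the same as the paper's: both proofs hinge on the conjugacy flip $w \in \partial f(a) \iff a \in \partial f^*(w)$ together with the substitution $A^T u = A^T x - \rho A^T A w$; you simply run the chain in the opposite direction, starting from an element of the first resolvent and landing in the second, where the paper starts from an element of the second and lands in the first. Since every step in the chain is an equivalence, either direction works.

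The one genuine omission is the single-valuedness of the second term. The lemma asserts an \emph{equality} of vectors, so it is not enough to exhibit one element $\rho A w$ of the set $\rho A(\partial f^* + \rho A^T A)^{-1}A^T x$ that completes the decomposition; you must also show that $A$ maps every element of $(\partial f^* + \rho A^T A)^{-1}(A^T x)$ to the same point, otherwise the right-hand side is not a well-defined vector. Your closing sentence explicitly retreats to the weaker ``membership'' reading rather than proving this. The paper closes the gap with a short monotonicity computation: if $y$ and $z$ both lie in $(\partial f^* + \rho A^T A)^{-1}(q)$, adding the two subgradient inequalities yields $0 \geq \rho\|A(y - z)\|_2^2$, hence $Ay = Az$. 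Alternatively, you could close it with the machinery you already have: reversing your implications, any $w'$ in the second resolvent gives $x - \rho A w' \in (I + \rho A \partial f A^T)^{-1}(x)$, and single-valuedness of that resolvent forces $\rho A w' = x - u = \rho A w$. Either way, the point is worth making explicit, since the ADMM derivation in the main text uses $\rho A(\partial g + \rho A^T A)^{-1}A^T(\cdot)$ as a single-valued map.
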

\begin{proof}
Recall that $(I + \rho A \partial f A^T)^{-1}$
is always single-valued, because it is the proximal operator of the convex
function $f \circ A^T$ \cite{parikh2014proximal}.  So to start we shall show that
$A(\partial f^* + \rho A^T A)^{-1} q$ is also single-valued for any $q$, $A$,
and convex $f^*$. Choose $y$ and $z$ to be any two elements of $(\partial f^* +
\rho A^TA)^{-1} q$. We shall show that it must be the case that $Ay = Az$, even if $z
\neq y$. Membership of the set implies that
\begin{align*}
q - \rho A^TAy &\in \partial f^*(y)\\
q - \rho A^TAz &\in \partial f^*(z),
\end{align*}
therefore by convexity and the definition of subdifferentials we have
\begin{align*}
f^*(z) &\geq f^*(y) + (q - \rho A^TAy)^T(z - y)\\
f^*(y) &\geq f^*(z) + (q - \rho A^TAz)^T(y - z),
\end{align*}
and adding these we get
\begin{align*}
0 &\geq (q - \rho A^TAy)^T(z - y) + (q - \rho A^TAz)^T(y - z)\\
&=\rho (A^TAy)^T(y-z) - \rho (A^TAz)^T(y-z)\\
&= \rho (y - z)^TA^TA(y-z)\\
&=\rho \|A(y-z)\|_2^2,
\end{align*}
which implies that $Az = Ay$, so
$A(\partial f^* + \rho A^TA)^{-1} q$ must be single-valued.

Now let $y = (\partial f^* + \rho A^T A)^{-1} A^Tx$ (which is valid, because it
is single-valued). We will make use of the following fact for any proper convex
$f$ (see Theorem 23.5 of \cite{rockafellar1970convex}).
For any $x^*, x \in \reals^m$,
\begin{align}
\label{eq:inclusions}
x \in \partial f^*(x^*) \qquad \text{iff} \qquad x^* \in \partial f(x)
\end{align}
Now using \eqref{eq:inclusions},
\begin{align*}
A^T(x - \rho Ay) \in \partial f^*(y) &\implies y \in (\partial f A^T)(x - \rho Ay) \\
&\implies \rho Ay \in \rho (A \partial f A^T)(x - \rho Ay)\\
&\implies x \in (I + \rho A \partial f A^T)(x - \rho Ay)
\end{align*}
Since $(I + \rho A \partial f A^T)^{-1}x$ is single valued, we can use
\eqref{eq:inclusions} along with the definition of $y$ to finish the proof:
\begin{align*}
x &= (I + \rho A \partial f A^T)^{-1} x + \rho Ay\\
	&= (I + \rho A \partial f A^T)^{-1} x + \rho A(\partial f^* + \rho A^TA)^{-1} A^Tx.
\end{align*}
\end{proof}
The Moreau decomposition can be seen as a
generalization of an orthogonal decomposition induced by a subspace, and
the standard statement of the theorem assumes that $A = I$, see, \eg,
\cite{parikh2014proximal}.  This extension can be interpreted as a decomposition
when the projection is weighted by the matrix $A$, since
\begin{align*}
  \argmin_v \left(f(A^Tv) + (1/2)\|v - y\|_2^2\right) &= (I +   A\partial fA^T)^{-1}y\\
  \argmin_u \left(f^*(u) + (1 / 2)\|Au - x \|_2^2\right) &=(\partial f^* + A^TA)^{-1} A^T x.
\end{align*}

\newcommand{\grad}{\nabla}
\subsection*{Convergence of Explicit discretization scheme when $\grad \Hc$ is $L$-Lipschitz}
To show convergence of the scheme presented in equation (\ref{e-explicit}) we
shall use the additional assumption that $\Hc$ has an $L$-Lipschitz gradient,
which implies that
\begin{align*}
\Hc(v) &\geq \Hc(u) + \nabla \Hc(u)^T (v-u) + (1/2L) \| \nabla \Hc(v) - \nabla \Hc(u) \|_2^2 \\
\Hc(v) &\leq \Hc(u) + \nabla \Hc(u)^T (v-u) + (L/2) \| v - u \|_2^2,
\end{align*}
for any $u, v$. Using this we can write:
\begin{align}
\label{e-app-explicit-1}
\begin{split}
  \Hc(z^{k+1}) - \Hc(z^k) &\leq \nabla \Hc(z^{k})^T(z^{k+1} - z^k) +(L/2) \|z^{k+1} - z^k \|_2^2\\
  &=\epsilon\nabla \Hc(z^{k})^T(J \nabla \Hc(z^{k}) + z_\star - z^{k})  +(\epsilon^2 L/2) \|J \nabla \Hc(z^{k}) + z_\star - z^{k}\|_2^2\\
  &\leq -\epsilon \Hc(z^k) - (\epsilon/2L) \|\nabla \Hc(z^k)\|_2^2  +(\epsilon^2
  L/2) \|J \nabla \Hc(z^{k}) + z_\star - z^{k}\|_2^2,
\end{split}
\end{align}
where the first inequality is a consequence of the Lipschitz assumption,
and the last is a combination of the Lipschitz assumption and the fact that
$J$ is skew symmetric.
Now we will use the following identity:
\[
\|(1-\epsilon)u + \epsilon v\|_2^2 = (1-\epsilon)\|u\|_2^2 + \epsilon\|v\|_2^2 - \epsilon(1-\epsilon)\|u - v\|_2^2
\]
for any $u,v$ and $\epsilon \in \reals$. We apply this to the following
\[
\|z^{k+1} - z_\star\|_2^2 = (1-\epsilon)\|z^k - z_\star\|_2^2 + \epsilon \|\nabla \Hc(z^k)\|_2^2 - \epsilon(1-\epsilon)\|J\nabla \Hc(z^k) + z_\star - z^k\|_2^2
\]
where we used the fact that $\|J \nabla \Hc(z)\|_2^2 = \|\nabla \Hc(z)\|_2^2$
since $J^TJ = I$. This allows us to replace the last term in (\ref{e-app-explicit-1})
\begin{align}
\label{e-app-explicit-2}
\begin{split}
  \Hc(z^{k+1}) - \Hc(z^k) &\leq -\epsilon \Hc(z^k) - (\epsilon/2L) \|\nabla \Hc(z^k)\|_2^2  +\\
  &\quad\frac{\epsilon L}{2(1-\epsilon)} \left((1-\epsilon)\|z^k - z_\star\|_2^2 + \epsilon \|\nabla \Hc(z^k)\|_2^2 - \|z^{k+1} - z_\star\|_2^2\right).
\end{split}
\end{align}
Now select $\epsilon$ to satisfy
\[
\frac{\epsilon}{2L} \geq \frac{\epsilon^2 L}{2(1-\epsilon)},
\]
which removes the terms involving $\|\nabla \Hc(z^k)\|_2^2$. For simplicity
we shall take $\epsilon = 1 / (L^{2} + 1)$, which satisfies the condition.
Now we take the sum of (\ref{e-app-explicit-2}), which telescopes to yield
\begin{equation}
\label{e-app-explicit-3}
  \Hc(z^T) - \Hc(z^0)
  \leq -\epsilon \sum_{k=0}^{T-1}\Hc(z^k) +
  (2L)^{-1}(\|z^{0} - z_\star\|_2^2 - \|z^{T} - z_\star\|_2^2).
\end{equation}
Now consider the averaged iterate $\bar z^T = (1/T) \sum_{k=0}^{T-1} z^k$
\[
  \Hc(\bar z^T) \leq \frac{1}{T}
  \sum_{k=0}^{T-1}\Hc(z^k) \leq \frac{1}{\epsilon T}\left(\Hc(z^0) + (2L)^{-1}\|z^{0} - z_\star\|_2^2 \right),
\]
where the first inequality is Jensen's, and the second follows from (\ref{e-app-explicit-3})
and the nonnegativity of $\Hc$. In other words $\Hc(\bar z^k) \rightarrow 0$,
and the rate of convergence is $O(1/k)$.

\subsection*{Convergence of Explicit discretization scheme when $\grad \Hc$ is $L$-Lipschitz and $\Hc$ is $\mu$ strongly convex}
Here we show the convergence of the scheme presented in equation (\ref{e-explicit}) under the assumption that $\Hc$ has an $L$-Lipschitz gradient and is $\mu$ strongly convex for $L \geq \mu > 0$. We show that the $\Hc(z^k)$ converges linearly.

The assumption of $L$-Lipschitz gradients implies,
\begin{align*}
\Hc(v) &\geq \Hc(u) + \nabla \Hc(u)^T (v-u) + (1/L) \| \nabla \Hc(v) - \nabla \Hc(u) \|_2^2/2 \\
\Hc(v) &\leq \Hc(u) + \nabla \Hc(u)^T (v-u) + L \| v - u \|_2^2/2,
\end{align*}
for any $u, v$. The $\mu$ strong convexity assumption implies
\begin{align*}
\Hc(v) &\leq \Hc(u) + \nabla \Hc(u)^T (v-u) + (1/\mu) \| \nabla \Hc(v) - \nabla \Hc(u) \|_2^2/2 \\
\Hc(v) &\geq \Hc(u) + \nabla \Hc(u)^T (v-u) + (\mu) \| v - u \|_2^2/2,
\end{align*}
for any $u, v$. In particular, we use
\begin{align*}
\mu \Hc(z) &\leq   \| \nabla \Hc(z) \|_2^2/2 \leq L \Hc(z) \\
\mu \| z - z_\star \|_2^2/2 &\leq \Hc(z) \leq  L \| z - z_\star \|_2^2/2
\end{align*}
Using this we can write:
\begin{align}
\label{e-app-explicit-1-strngcvx}
\begin{split}
  \Hc(z^{k+1}) - \Hc(z^k) &\leq \nabla \Hc(z^{k})^T(z^{k+1} - z^k) +(L/2) \|z^{k+1} - z^k \|_2^2\\
  &=\epsilon\nabla \Hc(z^{k})^T(J \nabla \Hc(z^{k}) + z_\star - z^{k})  +(\epsilon^2 L/2) \|J \nabla \Hc(z^{k}) + z_\star - z^{k}\|_2^2\\
  &\leq -\epsilon \Hc(z^k) - (\epsilon/2L) \|\nabla \Hc(z^k)\|_2^2  +(\epsilon^2
  L/2) \|J \nabla \Hc(z^{k}) + z_\star - z^{k}\|_2^2,\\
  &\leq -\epsilon \left(1 + \frac{\mu}{L}\right) \Hc(z^k) +(\epsilon^2
  L/2) \|J \nabla \Hc(z^{k}) + z_\star - z^{k}\|_2^2,
\end{split}
\end{align}
where the first inequality is a consequence of the Lipschitz assumption,
the second is a combination of the Lipschitz assumption and the fact that
$J$ is skew symmetric.
Now using triangle and Jensen's inequalities:
\begin{equation}
\Hc(z^{k+1}) - \Hc(z^k)\leq -\epsilon \left(1 + \frac{\mu}{L}\right) \Hc(z^k) + \epsilon^2
  L \left(\|\nabla \Hc(z^{k})\|_2^2 + \|z_\star - z^{k}\|_2^2\right),
\end{equation}
where we used the fact that $\|J \nabla \Hc(z)\|_2^2 = \|\nabla \Hc(z)\|_2^2$.
All together, we have
\begin{equation}
\Hc(z^{k+1}) - \Hc(z^k)\leq -\epsilon \left(1 + \frac{\mu}{L}\right) \Hc(z^k) + 2 \epsilon^2
  L^2 \Hc(z^k) + 2 \epsilon^2 \frac{L}{\mu} \Hc(z^k),
\end{equation}
Thus, if $2\epsilon \leq (L^2 +  L / \mu)^{-1}$, we have
\begin{equation}
\Hc(z^{k+1}) \leq \left(1 - \epsilon \frac{\mu}{L} \right) \Hc(z^k) \leq \left(1 - \epsilon \frac{\mu}{L} \right)^k \Hc(z^0)
\end{equation}
Taking $2 \epsilon = (L^2 +  L / \mu)^{-1}$ for simplicity we have
\begin{equation}
\Hc(z^{k+1}) \leq \left(1 - \frac{\mu}{2L^2\mu + 2L} \frac{\mu}{L} \right)^k \Hc(z^0)
\end{equation}

\subsection*{PDHG corresponds to a discretization of Hamiltonian descent}
The Hamiltonian descent equations are given by
\begin{align*}
\dot y_t &= \nabla g^*(q_t) - y_t\\
\dot q_t &= -A^T\nabla h(Ay_t) - q_t,
\end{align*}
and if we parameterize $q_t = A^T p_t$ then we can rewrite these as
\begin{align*}
\dot y_t &= \nabla g^*(A^Tp_t) - y_t\\
\dot p_t &= -\nabla h(Ay_t) - p_t.
\end{align*}
Now we use the same trick as before, introducing identical terms that we add and
subtract
\begin{align*}
\dot y_t &= \nabla g^*(A^Tp_t + y_t / \sigma - y_t /\sigma) - y_t\\
\dot p_t &= -\nabla h(Ay_t + p_t / \rho- p_t /\rho) - p_t,
\end{align*}
and then discretize as follows (which is valid due to the fact that we assumed that the
Hamiltonian was continuously differentiable):
\begin{align*}
(p^{k+\epsilon} - p^k)/\epsilon  &= -\nabla h(Ay^{k} + p^{k+\epsilon} / \rho- p^k /\rho) - p^k\\
(y^{k+\epsilon} - y^k)/\epsilon &= \nabla g^*(A^Tp^{k+\epsilon} + y^k / \sigma - y^{k+\epsilon} /\sigma) - y^k.
\end{align*}
Setting $\epsilon = 1$ and rearranging yields
\begin{align*}
\partial h^*(-p^{k+1})- p^{k+1} /\rho  &=Ay^{k} - p^k / \rho\\
\partial g(y^{k+1}) + y^{k+1} /\sigma &= A^Tp^{k+1} + y^k / \sigma,
\end{align*}
and finally
\begin{align*}
p^{k+1} &=-(I + \rho \partial h^*)^{-1}(\rho Ay^{k} - p^k)\\
y^{k+1} &= (I + \sigma \partial g)^{-1}(\sigma A^Tp^{k+1} + y^k),
\end{align*}
which is PDHG.

\subsection*{Other gradient methods on problem (\ref{e-ls})}
\begin{figure}[ht]
\centering
\begin{subfigure}{0.47\textwidth}
\centering
\includegraphics[width=\linewidth]{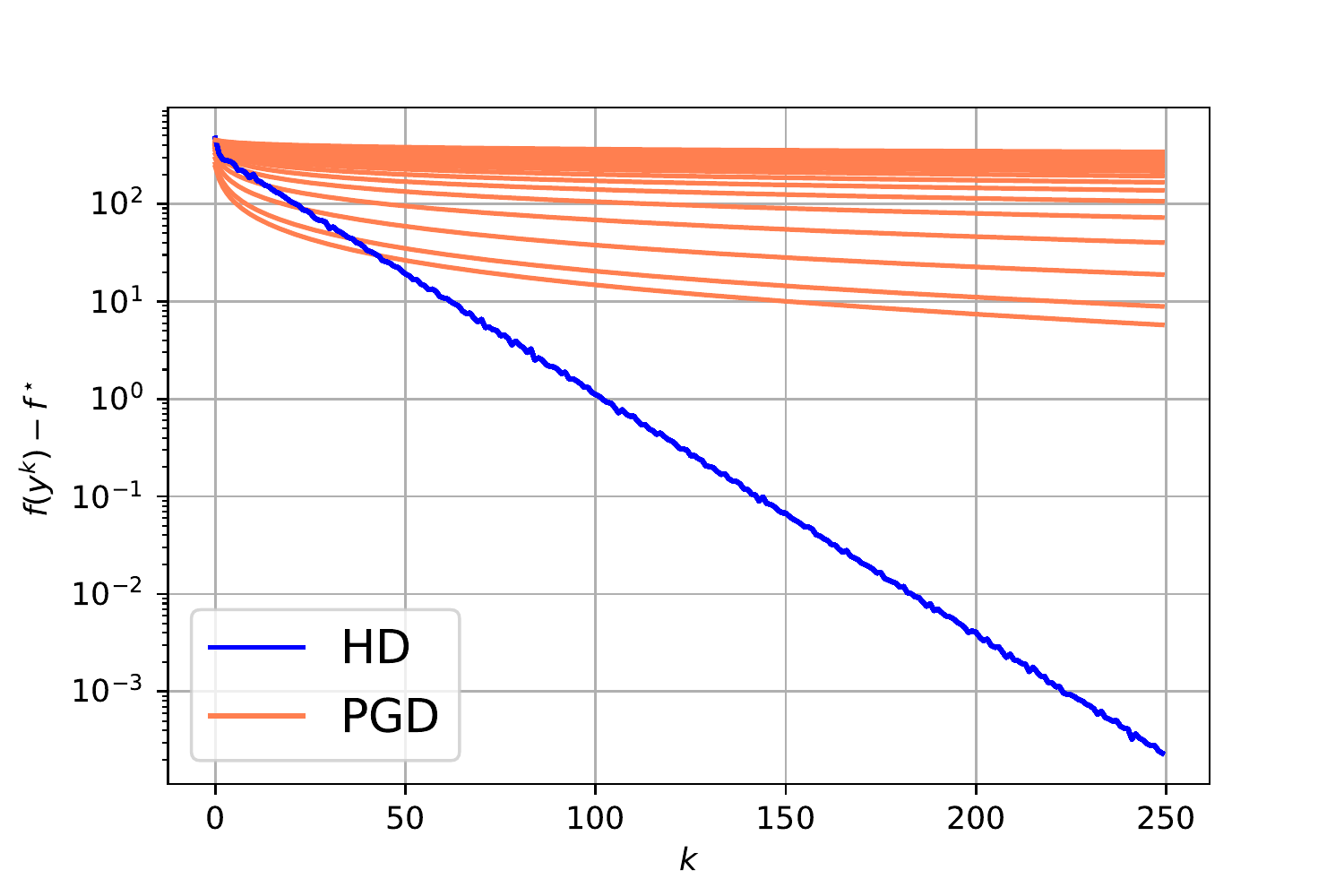}
\caption{HD and proximal gradient descent (PGD).}
\end{subfigure}
\begin{subfigure}{0.47\textwidth}
\centering
\includegraphics[width=\linewidth]{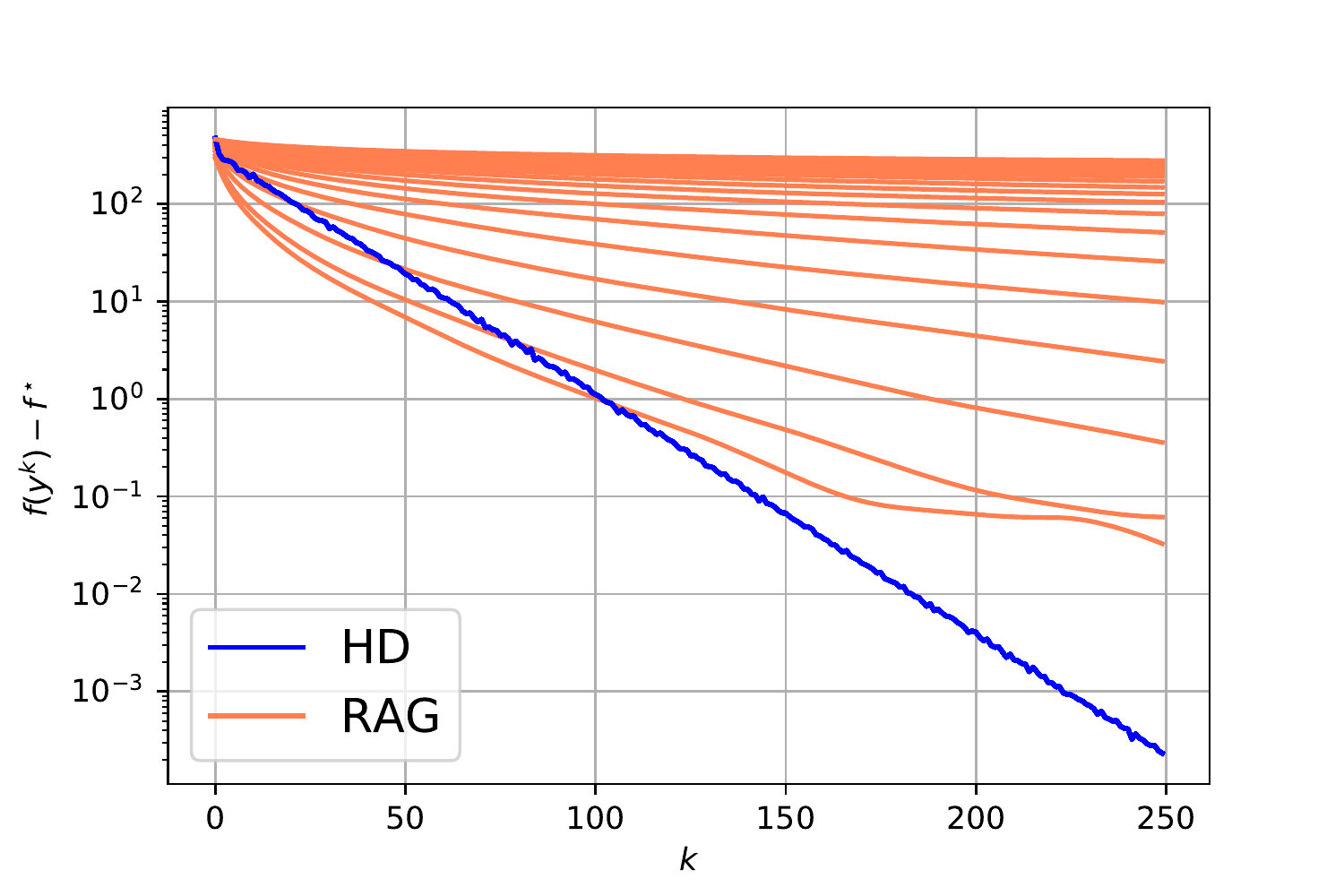}
\caption{HD and restarted accelerated gradient (RAG).}
\end{subfigure}
\begin{subfigure}{0.47\textwidth}
\centering
\includegraphics[width=\linewidth]{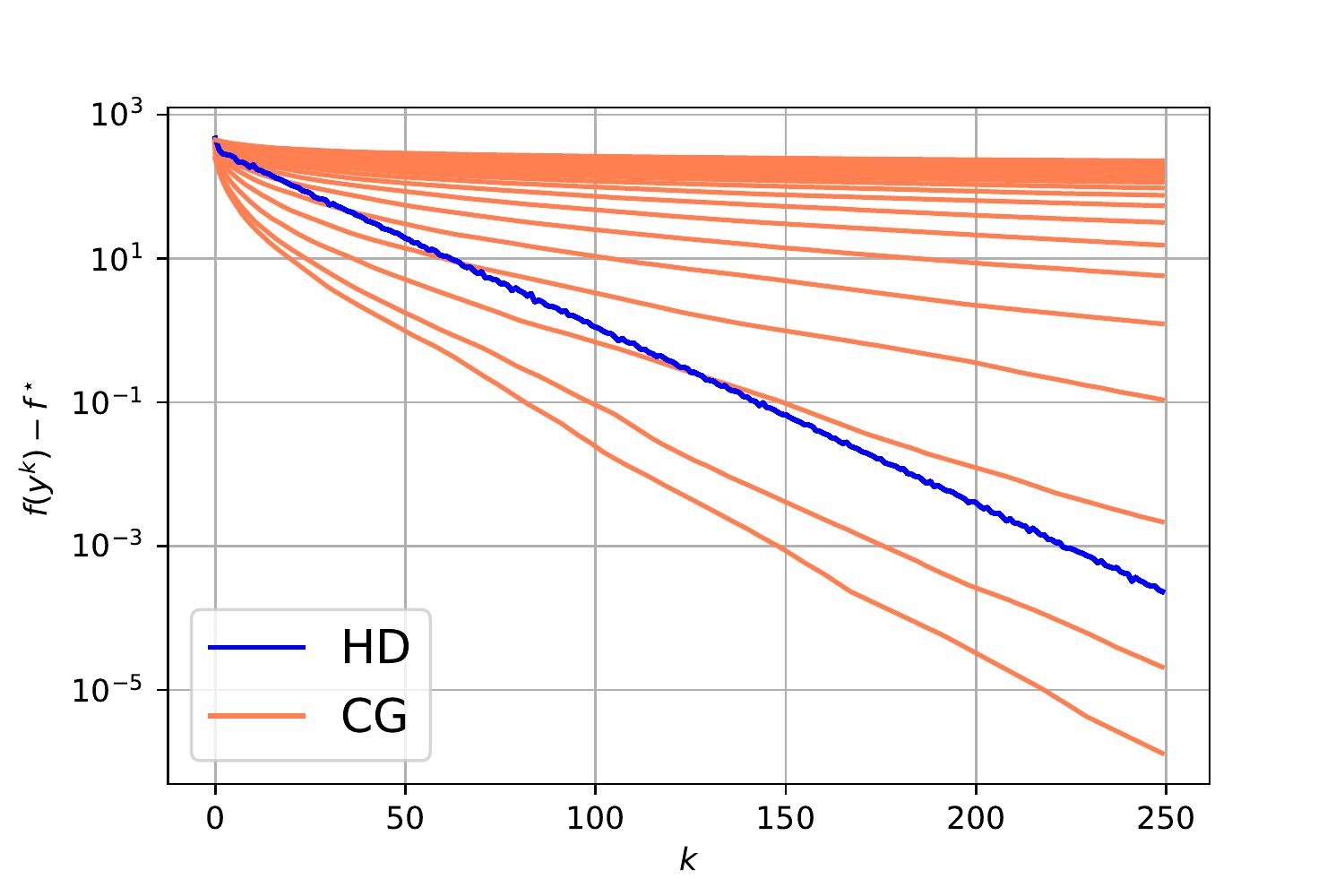}
\caption{HD and conjugate gradient (CG).}
\end{subfigure}
\caption{Comparison of Hamiltonian descent (HD) and other gradient methods for
problem (\ref{e-ls}) for different $j$.}
\label{f-other_grads}
\end{figure}
\end{document}